\newtheorem{thm}{Theorem~}
\newtheorem{cor}[thm]{Corollary~}
\newtheorem{lem}[thm]{Lemma~}
\theoremstyle{remark}
\newtheoremstyle{specthm}{1.5ex plus 1ex minus .2ex}{1.5ex plus 1ex minus .2ex}{\it}{}{\bf}{}{1em}{\thmnote{#3}}
\theoremstyle{specthm}
\def\R{{\mathbb R}}
\def\e{\varepsilon}
\newcommand{\abs}[1]{\left\vert{#1}\right\vert}         
\newcommand{\set}[1]{\left\{{#1}\right\}}               
\def\vol{\mbox{\rm Vol}}
\newcommand{\sr}{S^{n-1}}                               
\newcommand{\normk}[2]{\left\Vert{#2}\right\Vert_{#1}}  
\newcommand{\rdn}[2]{\mathcal{R}_{#1}\left(#2\right)}   
\newcommand{\grsm}[1]{G(n,#1)}                          
\newcommand{\intk}[2]{\int_0^{#1}t^{n-k-1}f\left(#2\right)d t}
\begin{document}


\title[Stability and slicing inequalities]
{Stability and slicing inequalities for intersection bodies}

\author{Alexander Koldobsky}

\address{Department of Mathematics\\
University of Missouri\\
Columbia, MO 65211}

\email{koldobskiya@missouri.edu}

\author{Dan Ma}

\address{Department of Mathematics\\
University of Missouri\\
Columbia, MO 65211}

\email{madan516@gmail.com}

\subjclass{Primary 52A20}

\keywords{Convex bodies, volume, sections}

\begin{abstract} We prove a generalization of the hyperplane inequality
for intersection bodies, where volume is replaced by an arbitrary measure $\mu$ with
even continuous density and sections are of arbitrary dimension $n-k,\ 1\le k <n.$
If $K$ is a generalized $k$-intersection body, then
$$\mu(K)\,\leq\,\frac{n}{n-k}c_{n,k}\max_{H} \mu(K\cap H) \vol_n(K)^{k/n}.$$
Here $c_{n,k} = |B_2^n|^{(n-k)/n}/|B_2^{n-k}|<1,$
$|B_2^n|$ is the volume of the unit Euclidean ball, and maximum is taken over all
$(n-k)$-dimensional subspaces of $\R^n.$ The constant is optimal, and
for each intersection body the inequality holds for every $k.$ We also prove a stronger
``difference" inequality. The proof is based on stability in the lower dimensional
Busemann-Petty problem for arbitrary measures in the following sense. Let $\e>0,\ 1\le k <n.$ Suppose
that $K$ and $L$ are origin-symmetric star bodies in $\R^n,$ and
$K$ is a generalized $k$-intersection body. If for every $(n-k)$-dimensional subspace
$H$ of $\R^n$
$$\mu(K\cap H)\leq \mu(L\cap H)+\e,$$
then
$$\mu(K)\leq \mu(L) +\frac{n}{n-k}c_{n,k} \vol_n(K)^{k/n}\e.$$
\end{abstract}
\maketitle


\section{Introduction}

The Busemann-Petty problem, posed in 1956 in \cite{BP56}, asks the following question.
Suppose that $K$ and $L$ are origin-symmetric convex bodies in $\R^n$ so that
\[\vol_{n-1}(K\cap \xi^\bot)\leq\vol_{n-1}(L\cap \xi^\bot),\quad\forall \xi\in\sr{},\]
where $\xi^\bot$ is the central hyperplane perpendicular to $\xi.$
Does it follow that
\[\vol_n(K)\leq\vol_n(L)?\]
The answer is affirmative if $n\le 4$ and negative if $n\ge 5.$
The solution was completed at the end of the 90's as the result of
a sequence of papers \cite{LR}, \cite{Ba}, \cite{Gi}, \cite{Bo4},
\cite{L}, \cite{Pa}, \cite{G1}, \cite{G2}, \cite{Z1}, \cite{Z2}, \cite{K1}, \cite{K2}, \cite{Z3},
\cite{GKS}; see \cite[p. 3]{K3} or \cite[p. 343]{G3} for details.

It is natural to ask what happens if hyperplane sections are replaced by sections of lower dimensions.
Suppose that for every ($n-k$)-dimensional subspace $H\in\R^n$,
\[\vol_{n-k}(K\cap H)\leq\vol_{n-k}(L\cap H).\]
Does it follow that
\[\vol_n(K)\leq\vol_n(L)?\]
Zhang \cite{Zha96} proved that the answer is affirmative if and only if
all origin-symmetric convex bodies in $\R^n$ are generalized $k$-intersection
bodies (see definition in Section 2; this is similar to the connection between the original
Busemann-Petty problem and intersection bodies established by Lutwak in \cite{L}).
Using this connection, Bourgain and Zhang \cite{BZ99} proved  that
the answer is negative if the dimension of sections $n-k > 3$ (see also \cite{RZ04} and different
later proof  in \cite{Kol00}).
However, the cases of two- and three-dimensional sections remain open. Other results on
the lower dimensional Busemann-Petty problem can be found
in  \cite{Mil06, Mil07, Mil08, Yas06a, Yas06b, Rub09, RZ04}.

In this paper, we establish stability in the affirmative part of the lower dimensional Busemann-Petty problem.
Stability problems in convex geometry have been considered for a long time; see \cite{Gro96} for numerous results
and references. Stability in volume comparison problems was first studied in \cite{Kola}, where such results were
proved for the Busemann-Petty and Shephard problems. We extend the result of \cite[Theorem 1]{Kola} to sections
of lower dimensions in the following way.

\begin{thm}\label{thm:stbp}
    Let $K$ and $L$ be origin-symmetric star bodies in $\R^n$,
    and $1\le k<n$. Suppose $K$ is a generalized $k$-intersection body and $\e>0.$
    If for every $(n-k)$-dimensional subspace $H$ of $\R^n$
    \begin{equation}\label{eqn:sect}
        \vol_{n-k}(K\cap H)\leq\vol_{n-k}(L\cap H)+\e,
    \end{equation}
    then
    \begin{equation}\label{eqn:comp}
        \vol_n(K)^{\frac{n-k}n}\leq\vol_n(L)^{\frac{n-k}n}+c_{n,k}\ \e,
    \end{equation}
    where $c_{n,k} = |B_2^n|^{(n-k)/n}/|B_2^{n-k}|$ and $|B_2^n|$ is the volume
of the unit Euclidean ball.
\end{thm}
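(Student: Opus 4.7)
My plan is to prove the theorem by combining the Parseval-type integral representation for generalized $k$-intersection bodies with two applications of Hölder's inequality in polar coordinates.

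First I would pass everything to polar coordinates, writing $n\vol_n(K)=\int_{\sr}\norm{x}_K^{-n}dx$ and $(n-k)\vol_{n-k}(K\cap H)=\int_{\sr\cap H}\norm{x}_K^{-(n-k)}dx$. The key step is to use the defining property of a generalized $k$-intersection body: there exists a nonnegative Borel measure $\mu_K$ on $\grsm{n-k}$ such that for every even continuous star body $L$,
\[
\int_{\sr}\norm{x}_K^{-k}\norm{x}_L^{-(n-k)}dx=(n-k)\int_{\grsm{n-k}}\vol_{n-k}(L\cap H)\,d\mu_K(H).
\]
Applying this with $L=K$ gives $n\vol_n(K)=(n-k)\int\vol_{n-k}(K\cap H)d\mu_K(H)$, and then the hypothesis (\ref{eqn:sect}) yields
\[
n\vol_n(K)\leq\int_{\sr}\norm{x}_K^{-k}\norm{x}_L^{-(n-k)}dx+(n-k)\,\e\,\mu_K(\grsm{n-k}).
\]

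Next I would bound the first term by Hölder's inequality with exponents $n/k$ and $n/(n-k)$:
\[
\int_{\sr}\norm{x}_K^{-k}\norm{x}_L^{-(n-k)}dx\leq\bigl(n\vol_n(K)\bigr)^{k/n}\bigl(n\vol_n(L)\bigr)^{(n-k)/n}.
\]
For the second term, I need to control the total mass $\mu_K(\grsm{n-k})$. Applying the same Parseval-type formula with $L=B_2^n$ gives $\int_{\sr}\norm{x}_K^{-k}dx=(n-k)\abs{B_2^{n-k}}\mu_K(\grsm{n-k})$, and Hölder again (with the same exponents, using $\abs{\sr}=n\abs{B_2^n}$) yields
\[
\mu_K(\grsm{n-k})\leq\frac{n}{n-k}\,\frac{\abs{B_2^n}^{(n-k)/n}}{\abs{B_2^{n-k}}}\,\vol_n(K)^{k/n}=\frac{n}{n-k}\,c_{n,k}\,\vol_n(K)^{k/n}.
\]

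Assembling these estimates, dividing through by $n$, and then dividing by $\vol_n(K)^{k/n}$, the constant $(n-k)/n$ from the $\e$-term cancels exactly against the $n/(n-k)$ in the bound on $\mu_K(\grsm{n-k})$, producing
\[
\vol_n(K)^{(n-k)/n}\leq\vol_n(L)^{(n-k)/n}+c_{n,k}\,\e,
\]
which is (\ref{eqn:comp}). The only delicate point is invoking the Parseval-type representation in the correct generality (star bodies, not just convex bodies); this should follow from the extension of the embedding-theoretic characterization of generalized $k$-intersection bodies via the spherical Radon transform, which I would cite from the monograph \cite{K3}. The rest is purely Hölder, so I do not anticipate significant obstacles.
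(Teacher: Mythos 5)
Your proposal is correct and follows essentially the same route as the paper: the defining positive functional of a generalized $k$-intersection body (which you phrase as a measure $\mu_K$ on the Grassmannian) is applied to the sectional hypothesis, the cross term $\int_{S^{n-1}}\|x\|_K^{-k}\|x\|_L^{-(n-k)}dx$ is bounded by H\"older, and the total mass $\mu_K(G(n,n-k))$ is bounded by the same H\"older step applied with $L=B_2^n$ (the paper does this via $\mathcal{R}_{n-k}1_S=|S^{n-k-1}|1_G$, which is the same computation). The only cosmetic difference is that the paper keeps $\mu_0$ as a positive linear functional on the image of the Radon transform rather than extending it to a Borel measure on the Grassmannian, which sidesteps the (harmless, but unnecessary) extension step you allude to.
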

Note that $c_{n,k}<1,$ which immediately follows from the log-convexity of the $\Gamma$-function
(see for example \cite[Lemma 2.1]{KL}). Also, in the formulation of Theorem 1 in \cite{Kola} the constant
$c_{n,1}$ was replaced by 1, though the proof there gives the result with $c_{n,1}.$

\smallbreak
Zvavitch \cite{Zv} found a remarkable generalization of the Busemann-Petty problem
to arbitrary measures. It appears that one can replace volume by any measure with
even continuous density in $\R^n.$ Let $f$ be an even continuous non-negative
function on $\R^n,$ and denote by $\mu$ the measure on $\R^n$ with density $f$.
For every closed bounded set $B\subset \R^n$ define
$$\mu(B)=\int\limits_B f(x)\ dx.$$ It was proved in \cite{Zv} that, for $n\le 4$  and any
origin-symmetric convex bodies $K$ and $L$ in $\R^n,$ the inequalities
$$\mu(K\cap \xi^\bot) \le \mu(L\cap \xi^\bot), \qquad \forall \xi\in S^{n-1}$$
imply
$$\mu(K)\le \mu(L). $$
Zvavitch also proved that this is generally not true if $n\ge 5,$ namely, for any $\mu$ with strictly positive
even continuous density there exist $K$ and $L$ providing a counterexample.

Stability in Zvavitch's result was established in \cite[Theorem 2]{K6}. Here we extend this result
to sections of lower dimensions, as follows.
\begin{thm}\label{thm:stbpgm}
    Let $K$ and $L$ be origin-symmetric star bodies in $\R^n$, and $1<k<n$.
    Suppose $K$ is a generalized $k$-intersection body and $\e>0.$
    If for every $(n-k)$-dimensional subspace $H$ of $\R^n$
    \begin{equation}\label{eqn:mucomp}
        \mu(K\cap H)\leq\mu(L\cap H)+\varepsilon,
    \end{equation}
    then
    \[\mu(K)\leq\mu(L)+{\frac{n}{n-k}}c_{n,k}\vol_n(K)^{k/n}\varepsilon.\]
\end{thm}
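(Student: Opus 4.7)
The plan is to follow the same pattern as the proof of Theorem \ref{thm:stbp}, but keeping the measure $\mu$ in place of Lebesgue measure throughout, and then compare $\mu(K)-\mu(L)$ directly rather than $\vol_n(K)^{(n-k)/n}-\vol_n(L)^{(n-k)/n}$. First I would express everything in polar coordinates. Writing $\|\cdot\|_K$ for the Minkowski functional of $K$ and $\rho_K=\|\cdot\|_K^{-1}$ for the radial function, we have
\[
\mu(K\cap H)=\int_{S^{n-1}\cap H}\!\!\int_0^{\rho_K(\theta)} f(r\theta)\,r^{n-k-1}\,dr\,d\theta,\qquad
\mu(K)=\int_{S^{n-1}}\!\!\int_0^{\rho_K(\theta)} f(r\theta)\,r^{n-1}\,dr\,d\theta,
\]
and analogously for $L$. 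Set $A_M(\theta):=\int_0^{\rho_M(\theta)} f(r\theta)r^{n-k-1}dr$ for $M=K,L$; then $\mu(M\cap H)=R_{n-k}A_M(H)$, where $R_{n-k}$ denotes the (unnormalized) $(n-k)$-dimensional spherical Radon transform on $S^{n-1}\cap H$.

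Next I would invoke the defining property of a generalized $k$-intersection body: there is a finite positive Borel measure $\nu_K$ on $G(n,n-k)$ such that, for every even continuous $g$ on $S^{n-1}$,
\[
\int_{S^{n-1}} g(\theta)\,\|\theta\|_K^{-k}\,d\theta \;=\; \int_{G(n,n-k)} R_{n-k}g(H)\,d\nu_K(H).
\]
Integrating the hypothesis (\ref{eqn:mucomp}) against $d\nu_K(H)$ over $G(n,n-k)$ and applying this identity with $g=A_K$ and $g=A_L$, the inequality becomes
\[
\int_{S^{n-1}} \rho_K(\theta)^{k}\!\!\int_{\rho_L(\theta)}^{\rho_K(\theta)}\!\! f(r\theta)\,r^{n-k-1}\,dr\,d\theta \;\le\; \varepsilon\,\nu_K(G(n,n-k)).
\]
The crucial pointwise observation, which is the same trick underlying Theorem \ref{thm:stbp}, is that for every $\theta$ one has $\rho_K(\theta)^k r^{n-k-1}\ge r^{n-1}$ when $r\le\rho_K(\theta)$ and $\rho_K(\theta)^k r^{n-k-1}\le r^{n-1}$ when $r\ge\rho_K(\theta)$; in either case (whether $\rho_K(\theta)\ge\rho_L(\theta)$ or not) this gives
\[
\rho_K(\theta)^{k}\!\!\int_{\rho_L(\theta)}^{\rho_K(\theta)}\!\! f(r\theta)\,r^{n-k-1}\,dr \;\ge\; \int_{\rho_L(\theta)}^{\rho_K(\theta)}\!\! f(r\theta)\,r^{n-1}\,dr.
\]
Integrating over $S^{n-1}$ and comparing with the polar-coordinate expression for $\mu(K)-\mu(L)$ yields $\mu(K)-\mu(L)\le \varepsilon\,\nu_K(G(n,n-k))$.

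It remains to bound the total mass $\nu_K(G(n,n-k))$. Taking $g\equiv 1$ in the defining identity gives $|S^{n-k-1}|\,\nu_K(G(n,n-k))=\int_{S^{n-1}}\rho_K(\theta)^k d\theta$, and Hölder's inequality with exponents $n/k$ and $n/(n-k)$ bounds the latter by $(n\vol_n(K))^{k/n}|S^{n-1}|^{(n-k)/n}$. Using $|S^{m-1}|=m|B_2^m|$ this simplifies to exactly $\frac{n}{n-k}c_{n,k}\vol_n(K)^{k/n}$, which combined with the previous step gives the desired inequality. The step that requires some care is the pointwise estimate relating the $r^{n-k-1}$ weight (forced by taking sections) to the $r^{n-1}$ weight (needed for the full measure $\mu$); once that is in hand, everything else is a direct computation. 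No convexity of $f$, $K$, or $L$ beyond star-shapedness is needed, in keeping with the generality of the statement.
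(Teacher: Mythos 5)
Your proposal is correct and follows essentially the same route as the paper: polar coordinates for $\mu(K\cap H)$ via the $(n-k)$-dimensional Radon transform, application of the positive functional defining the generalized $k$-intersection body, the pointwise comparison of the $r^{n-k-1}$ and $r^{n-1}$ weights (which is exactly the paper's Lemma~\ref{thm:elem}, due to Zvavitch), and the H\"older bound on the total mass of the functional. The only cosmetic difference is that you represent the positive functional as a measure on $G(n,n-k)$ and fold the paper's two added inequalities into one rearranged estimate.
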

In the case $f\equiv 1,$ we get another stability result for volume which is weaker than what is provided
by Theorem \ref{thm:stbp}. This is the reason why we state Theorem \ref{thm:stbp} separately. However,
for arbitrary measures the constant in Theorem \ref{thm:stbpgm} is the best possible, as follows from
the example after Corollary \ref{hyper-meas}.

The stability results mentioned above were applied in \cite{Kola,K6} to the
hyperplane (or slicing) problem of Bourgain \cite{Bo1,Bo2} that can be formulated as follows.
Does there exist an absolute constant $C$ so that for any origin-symmetric convex body $K$ in $\R^n$
\begin{equation} \label{hyper}
\vol_n(K)^{\frac {n-1}n} \le C \max_{\xi \in S^{n-1}} \vol_{n-1}(K\cap \xi^\bot) ?
\end{equation}
The best-to-date estimate $C\sim n^{1/4}$ is due to Klartag \cite{Kl},
who removed the logarithmic term from the previous estimate of Bourgain \cite{Bo3}.
We refer the reader to recent papers \cite{DP,EK} for the history and 
current state of the hyperplane problem.

In the case where $K$ is an intersection body (see Section 2 for definitions and properties),
the inequality (\ref{hyper}) is known for sections of arbitrary dimension
with the best possible constant. For any $1\le k<n,$
\begin{equation}\label{hyper-inter}
\vol_n(K)^{\frac {n-k}n} \le c_{n,k}
\max_{H\in G(n,n-k)} \vol_{n-k}(K\cap H),
\end{equation}
where $G(n,n-k)$ is the Grassmanian of $(n-k)$-dimensional subspaces of $\R^n,$
and the equality is attained when $K=B_2^n.$ In particular, if the dimension $n\le 4,$
then (\ref{hyper-inter}) is true for any origin-symmetric convex body $K.$ The proof
is an immediate consequence of Zhang's connection between generalized intersection
bodies and the lower dimensional Busemann-Petty problem; apply this connection to any
generalized $k$-intersection body $K$ and $L=B_2^n.$ Then use the fact that every intersection
body is a generalized $k$-intersection body for every $k$ (see \cite{GZ} or \cite{Mil06}).
For every fixed $k,$ the inequality (\ref{hyper-inter}) holds for any generalized $k$-intersection body $K.$

We prove several generalizations of (\ref{hyper-inter}) using the stability results formulated above.
First, interchanging $K$ and $L$ in Theorem \ref{thm:stbp}, we get the following ``difference" inequality,
previously established in \cite[Corollary 1]{Kola} in the hyperplane case.
\begin{cor}\label{thm:stbpa}
    Let $K$ and $L$ be origin-symmetric star bodies in $\R^n$,
    and $1\le k<n$. Suppose $K$ and $L$ are  generalized $k$-intersection bodies, then
    $$\abs{\vol_n(K)^{\frac{n-k}n}-\vol_n(L)^{\frac{n-k}n}}$$
    $$\leq c_{n,k} \max_{H\in G(n,n-k)}\left|\vol_{n-k}(K\cap H)-\vol_{n-k}(L\cap H)\right|.$$
\end{cor}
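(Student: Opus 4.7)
The plan is to apply Theorem \ref{thm:stbp} twice, once in each direction, using the fact that both $K$ and $L$ are assumed to be generalized $k$-intersection bodies. Concretely, I would first set
\[
\e \;=\; \max_{H\in G(n,n-k)}\bigl|\vol_{n-k}(K\cap H)-\vol_{n-k}(L\cap H)\bigr|,
\]
so that for every $(n-k)$-dimensional subspace $H\subset \R^n$ both inequalities
\[
\vol_{n-k}(K\cap H)\leq \vol_{n-k}(L\cap H)+\e
\qquad\text{and}\qquad
\vol_{n-k}(L\cap H)\leq \vol_{n-k}(K\cap H)+\e
\]
hold simultaneously.

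Next I would invoke Theorem \ref{thm:stbp}. Since $K$ is a generalized $k$-intersection body, the first family of inequalities yields
\[
\vol_n(K)^{\frac{n-k}{n}}\leq \vol_n(L)^{\frac{n-k}{n}}+c_{n,k}\,\e.
\]
Since $L$ is also a generalized $k$-intersection body, applying Theorem \ref{thm:stbp} again with the roles of $K$ and $L$ interchanged gives
\[
\vol_n(L)^{\frac{n-k}{n}}\leq \vol_n(K)^{\frac{n-k}{n}}+c_{n,k}\,\e.
\]
Combining these two bounds produces the desired two-sided estimate
\[
\bigl|\vol_n(K)^{\frac{n-k}{n}}-\vol_n(L)^{\frac{n-k}{n}}\bigr|\leq c_{n,k}\,\e,
\]
which is exactly the statement of the corollary.

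There is essentially no obstacle here, since the symmetric roles of $K$ and $L$ in the hypothesis are preserved by the symmetric applicability of Theorem \ref{thm:stbp}. The only point that requires the stronger hypothesis (both bodies being generalized $k$-intersection bodies rather than just one) is precisely the need to swap the roles, and this is already built into the statement of the corollary. Thus the proof reduces to the observation above, and no new technical input beyond Theorem \ref{thm:stbp} is needed.
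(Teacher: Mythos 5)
Your proposal is correct and is exactly the paper's argument: the authors obtain the corollary by ``interchanging $K$ and $L$ in Theorem \ref{thm:stbp}'', which is precisely your two-sided application with $\e$ taken to be the maximum of the section-volume differences. No further comment is needed.
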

Putting $L=\varnothing$ in the latter inequality, we get (\ref{hyper-inter}) for any generalized
$k$-intersection body $K.$

Interchanging $K$ and $L$ in Theorem \ref{thm:stbpgm},
we get the following inequality, which was earlier proved for $k=1$ in \cite[Corollary 1]{K6}.
\begin{cor} \label{ineq-meas} Let $K$ and $L$ be origin-symmetric star bodies in $\R^n$, and $1\le k<n$.
    Suppose that $K$ and $L$ are generalized $k$-intersection bodies. Then
$$\left|\mu(K) - \mu(L)\right| \le $$
$$
\frac {n}{n-k}c_{n,k} \max_{H}\left|\mu(K\cap H) - \mu(L\cap H)\right|
 \max\left\{\vol_n(K)^{k/n}, \vol_n(L)^{k/n}\right\},
$$
where maximum is taken over all $(n-k)$-dimensional subspaces $H$ of $\R^n.$
\end{cor}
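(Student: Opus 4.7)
The plan is to obtain Corollary \ref{ineq-meas} as an immediate symmetrization of Theorem \ref{thm:stbpgm}. Set
$$\varepsilon := \max_{H\in G(n,n-k)} |\mu(K\cap H) - \mu(L\cap H)|.$$
Then for every $(n-k)$-dimensional subspace $H$ one simultaneously has
$$\mu(K\cap H) \le \mu(L\cap H) + \varepsilon \quad\text{and}\quad \mu(L\cap H) \le \mu(K\cap H) + \varepsilon.$$
So both hypothesis inequalities of Theorem \ref{thm:stbpgm} hold, with the roles of $K$ and $L$ interchangeable because \emph{both} bodies are assumed to be generalized $k$-intersection bodies.

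Apply Theorem \ref{thm:stbpgm} first to the pair $(K,L)$, using the fact that $K$ is a generalized $k$-intersection body: this yields
$$\mu(K) - \mu(L) \le \frac{n}{n-k}c_{n,k}\vol_n(K)^{k/n}\varepsilon.$$
Now swap the two bodies and apply Theorem \ref{thm:stbpgm} to $(L,K)$, using that $L$ is also a generalized $k$-intersection body, to obtain
$$\mu(L) - \mu(K) \le \frac{n}{n-k}c_{n,k}\vol_n(L)^{k/n}\varepsilon.$$
Combining these two bounds,
$$|\mu(K) - \mu(L)| \le \frac{n}{n-k}c_{n,k}\varepsilon \max\left\{\vol_n(K)^{k/n},\vol_n(L)^{k/n}\right\},$$
which is precisely the claimed inequality after inserting the definition of $\varepsilon$.

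The only minor point to address is the case $k=1$, which Theorem \ref{thm:stbpgm} formally excludes (its hypothesis is $1<k<n$); there the same symmetrization argument works, using the hyperplane stability statement \cite[Theorem 2]{K6} in place of Theorem \ref{thm:stbpgm}, which gives the cited \cite[Corollary 1]{K6}. There is no real obstacle in this proof: all the analytic content (the Parseval-type identity on the sphere underlying the generalized $k$-intersection body hypothesis, the sharp constant $c_{n,k}$, and the $\vol_n(K)^{k/n}$ factor from Hölder) is packaged inside Theorem \ref{thm:stbpgm}, and the corollary is obtained by the standard trick of running the stability estimate twice.
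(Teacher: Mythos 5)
Your proof is correct and is essentially the paper's own argument: the authors obtain the corollary precisely by interchanging $K$ and $L$ in Theorem \ref{thm:stbpgm} and taking the maximum of the two resulting bounds. Your observation about the $k=1$ case is a reasonable way to handle what is evidently a typo in the stated range $1<k<n$ of Theorem \ref{thm:stbpgm} (whose proof works verbatim for $k=1$), so nothing essential is missing.
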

Putting $L=\varnothing,$ we generalize to lower dimensions the hyperplane inequality
for arbitrary measures from \cite[Theorem 1]{K6}.
\begin{cor}\label{hyper-meas} Let $1\le k <n,$ and suppose that $K$ is a generalized $k$-intersection
body in $\R^n.$ Then
\begin{equation} \label{arbmeas}
\mu(K) \le \frac n{n-k} c_{n,k} \max_{H\in G(n,n-k)} \mu(K\cap H)\ \vol_n(K)^{k/n}.
\end{equation}
\end{cor}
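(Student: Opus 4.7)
The plan is to derive Corollary~\ref{hyper-meas} as an immediate consequence of Theorem~\ref{thm:stbpgm}, by letting the comparison body $L$ shrink to the origin. This is exactly the ``$L=\varnothing$'' substitution that the authors indicate in the sentence preceding the statement.

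First I would set
$$\e := \max_{H\in G(n,n-k)}\mu(K\cap H),$$
which is finite and attained because the map $H\mapsto\mu(K\cap H)$ is continuous on the compact Grassmannian $G(n,n-k)$ (using continuity of the density $f$ together with boundedness of $K$). Then, for \emph{any} origin-symmetric star body $L$, the hypothesis (\ref{eqn:mucomp}) of Theorem~\ref{thm:stbpgm} is automatically satisfied with this value of $\e$: since $\mu(L\cap H)\ge 0$, we have
$$\mu(K\cap H)\le\e\le\mu(L\cap H)+\e\quad\text{for every }H\in G(n,n-k).$$
Theorem~\ref{thm:stbpgm} therefore yields
$$\mu(K)\le\mu(L)+\frac{n}{n-k}\,c_{n,k}\,\vol_n(K)^{k/n}\,\e.$$

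Next I would take $L=L_\delta:=\delta B_2^n$ and let $\delta\to 0^+$. Since $f$ is continuous (and in particular bounded on the fixed ball $B_2^n$), dominated convergence gives $\mu(L_\delta)\to 0$. Passing to the limit in the previous display and substituting the definition of $\e$ produces the desired inequality. The proof is essentially a plug-in; the only point worth emphasizing is that Theorem~\ref{thm:stbpgm} requires $K$, but not $L$, to be a generalized $k$-intersection body, which is what permits the degenerate shrinking-$L$ argument without any extra structural assumption on $L_\delta$. For this reason there is no substantive obstacle; one could alternatively apply Corollary~\ref{ineq-meas} with the same choice $L=\delta B_2^n$, using additionally that every Euclidean ball is an intersection body and hence a generalized $k$-intersection body for every $k$.
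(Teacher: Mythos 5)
Your proposal is correct and is essentially the paper's own derivation: the authors obtain Corollary~\ref{hyper-meas} by ``putting $L=\varnothing$'' in Corollary~\ref{ineq-meas} (equivalently, in Theorem~\ref{thm:stbpgm}), and your argument simply makes that shorthand rigorous by taking $\e=\max_H\mu(K\cap H)$ and letting $L=\delta B_2^n$ shrink to the origin. The only cosmetic point is the degenerate case $\max_H\mu(K\cap H)=0$, where Theorem~\ref{thm:stbpgm} formally requires $\e>0$; applying it with $\e+\eta$ and letting $\eta\to0^+$ (or noting directly that then $\mu(K)=0$) disposes of this.
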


The constant in the right-hand side is the best possible. In fact, let $K=B_2^n$
and, for every $j\in N,$  let $f_j$ be a non-negative continuous function on $[0,1]$ supported in $(1-\frac 1j,1)$
and such that $\int_0^1 f_j(t) dt =1.$ Let $\mu_j$ be the measure on $\R^n$ with density $f_j(|x|_2),$
where $|x|_2$ is the Euclidean norm.  We have
$$\mu_j(B_2^n) = |S^{n-1}| \int_0^1 r^{n-1}f_j(r) dr,$$
where $|S^{n-1}|= 2\pi^{n/2}/\Gamma(n/2)$ is the surface area of the unit sphere in $\R^n.$
For every $H\in G(n,n-k)$,
$$\mu_j(B_2^n \cap H) = |S^{n-k-1}| \int_0^1 r^{n-k-1} f_j(r) dr.$$
Clearly,
$$\lim_{j\to \infty} \frac{\int_0^1 r^{n-1}f_j(r) dr}{\int_0^1 r^{n-k-1}f_j(r) dr} =1.$$
Using $|S^{n-1}|=n|B_2^n|,$ we get
$$\lim_{j\to \infty} \frac{\mu_j(B_2^n)}{\max_{H} \mu_j(B_2^n\cap H)\ \vol_n(B_2^n)^{k/n}}
= \frac{|S^{n-1}|}{|S^{n-k-1}||B_2^n|^{k/n}} = \frac n{n-k}c_{n,k},$$
which shows that the constant is asymptotically optimal.


\section{Stability}
We say that a closed bounded set $K$ in $\R^n$ is a {\it star body}  if
every straight line passing through the origin crosses the boundary of $K$
at exactly two points different from the origin, the origin is an interior point of $K,$
and the {\it Minkowski functional}
of $K$ defined by
$$\|x\|_K = \min\{a\ge 0:\ x\in aK\}$$
is a continuous function on $\R^n.$

The {\it radial function} of a star body $K$ is defined by
$$\rho_K(x) = \|x\|_K^{-1}, \qquad x\in \R^n.$$
If $x\in S^{n-1}$ then $\rho_K(x)$ is the radius of $K$ in the
direction of $x.$

Writing the volume of $K$ in polar coordinates, one gets
\begin{equation} \label{polar-volume}
\vol_n(K)
=\frac{1}{n} \int_{S^{n-1}} \rho_K^n(\theta) d\theta=
\frac{1}{n} \int_{S^{n-1}} \|\theta\|_K^{-n} d\theta.
\end{equation}

The {\it spherical Radon transform}
$R:C(S^{n-1})\mapsto C(S^{n-1})$
is a linear operator defined by
$$Rf(\xi)=\int_{S^{n-1}\cap \xi^\bot} f(x)\ dx,\quad \xi\in S^{n-1}$$
for every function $f\in C(S^{n-1}).$

The polar formula (\ref{polar-volume}) for the volume of a hyperplane section expresses this volume
in terms of the spherical Radon transform (see for example \cite[p.15]{K3}):
\begin{equation} \label{volume=spherradon}
S_K(\xi)=\vol_{n-1}(K\cap \xi^\bot) =
\frac{1}{n-1} R(\|\cdot\|_K^{-n+1})(\xi).
\end{equation}

The spherical Radon
transform is self-dual (see \cite[Lemma 1.3.3]{Gro96}):
for any functions $f,g\in C(S^{n-1})$
\begin{equation} \label{selfdual}
\int_{S^{n-1}} Rf(\xi)\ g(\xi)\ d\xi = \int_{S^{n-1}} f(\xi)\ Rg(\xi)\ d\xi.
\end{equation}

Using self-duality, one can extend the spherical Radon transform to measures.
Let $\mu$ be a finite Borel measure on $S^{n-1}.$
We define the spherical Radon transform of $\mu$ as a functional $R\mu$ on
the space $C(S^{n-1})$ acting by
$$(R\mu,f)= (\mu, Rf)= \int_{S^{n-1}} Rf(x) d\mu(x).$$
By Riesz's characterization of continuous linear functionals on the
space $C(S^{n-1})$,
$R\mu$ is also a finite Borel measure on $S^{n-1}.$ If $\mu$ has
continuous density $g,$ then by (\ref{selfdual}) the
Radon transform of $\mu$ has density $Rg.$

The class of intersection bodies was introduced by Lutwak \cite{L}.
Let $K, L$ be origin-symmetric star bodies in $\R^n.$ We say that $K$ is the
intersection body of $L$ if the radius of $K$ in every direction is
equal to the $(n-1)$-dimensional volume of the section of $L$ by the central
hyperplane orthogonal to this direction, i.e. for every $\xi\in S^{n-1},$
\begin{equation} \label{intbodyofstar}
\rho_K(\xi)= \|\xi\|_K^{-1} = \vol_{n-1}(L\cap \xi^\bot).
\end{equation} \index{intersection body of a star body}
All the bodies $K$ that appear as intersection bodies of different star bodies
form {\it the class of intersection bodies of star bodies}.

Note that the right-hand
side of (\ref{intbodyofstar}) can be written in terms of the spherical Radon transform using (\ref{volume=spherradon}):
$$\|\xi\|_K^{-1}= \frac{1}{n-1} \int_{S^{n-1}\cap \xi^\bot} \|\theta\|_L^{-n+1} d\theta=
\frac{1}{n-1} R(\|\cdot\|_L^{-n+1})(\xi).$$
It means that a star body $K$ is
the intersection body of a star body if and only if the function $\|\cdot\|_K^{-1}$
is the spherical Radon transform of a continuous positive function on $S^{n-1}.$
This allows to introduce a more general class of bodies. A star body
$K$ in $\R^n$ is called an {\it intersection body}
if there exists a finite Borel measure \index{intersection body}
$\mu$ on the sphere $S^{n-1}$ so that $\|\cdot\|_K^{-1}= R\mu$ as functionals on
$C(S^{n-1}),$ i.e. for every continuous function $f$ on $S^{n-1},$
\begin{equation} \label{defintbody}
\int_{S^{n-1}} \|x\|_K^{-1} f(x)\ dx = \int_{S^{n-1}} Rf(x)\ d\mu(x).
\end{equation}

Intersection bodies played the crucial role in the solution of the original Busemann-Petty problem
due to the following connection found by Lutwak \cite{L}. If $K$ in an origin-symmetric
intersection body in $\R^n$ and $L$ is any origin-symmetric star body in $\R^n,$
then the inequalities $S_K(\xi)\le S_L(\xi)$ for all $\xi\in S^{n-1}$ imply that
$\vol_n(K)\le \vol_n(L),$ i.e. the answer to the Busemann-Petty problem in this
situation is affirmative. For more information about intersection bodies, see \cite[Chapter 4]{K3},
\cite{KY}, \cite[Chapter 8]{G3} and references there. In particular, every origin-symmetric convex
body in $\R^n,\ n\le 4$ is an intersection body; see \cite{G2, Z3, GKS}. Also the unit ball of any
finite dimensional subspace of $L_p,\ 0<p\le 2$ is an intersection body; see \cite{K1}.

Zhang in \cite{Zha96} introduced a generalization of intersection bodies.
For $1\leq k\leq n-1$, the \emph{($n-k$)-dimensional spherical Radon transform} is an operator
$\mathcal{R}_{n-k}:C(\sr{})\mapsto C(\grsm{n-k})$ defined by
\[\rdn{n-k}{f}(H)=\int_{\sr{}\cap H}f(x)d x,\quad H\in\grsm{n-k}.\]
Denote the image of the operator $\mathcal{R}_{n-k}$ by X:
$$\rdn{n-k}{C(\sr{})}=X\subset C(\grsm{n-k}).$$
Let $M^+(X)$ be the space of linear positive continuous functionals on $X$, i.e. for every
$\nu\in M^+(X)$ and non-negative function $f\in X$, we have $\nu(f)\geq0$.

An origin-symmetric star body $K$ in $\R^n$ is called a \emph{generalized $k$-intersection
body} if there exists a functional $\nu\in M^+(X)$, so that for every $f\in C(\sr{})$,
\[\int_{\sr{}}\normk{K}{x}^{-k}f(x)d x=\nu(\rdn{n-k}{f}).\]

When $k=1$ we get the class of intersection bodies.
It was proved by Grinberg and Zhang \cite[Lemma 6.1]{GZ} that every intersection body in $\R^n$
is a generalized $k$-intersection
body for every $k<n.$ More generally, as proved later by Milman \cite{Mil06}, if  $m$ divides $k$, then every
generalized $m$-intersection body is a generalized $k$-intersection body.  Zhang \cite{Zha96}
showed that the answer to the lower dimensional
Busemann-Petty problem is affirmative if and only if every origin-symmetric convex body
in $\R^n$ is a generalized $k$-intersection body.

Denote by $1_S\equiv 1$ and $1_G\equiv 1$ the functions which are equal to 1
everywhere on the unit sphere $S^{n-1}$ and the Grassmanian $G(n,n-k),$ correspondingly.
Then, $\rdn{n-k}{1_S}=|S^{n-k-1}|\ 1_G.$
\smallbreak
We are now ready to prove the stability in the lower dimensional Busemann-Petty problem.
\begin{proof}[Proof of Theorem \ref{thm:stbp}]
By the polar formula for volume (\ref{polar-volume}),  for each $H\in\grsm{n-k}$ we have
    \begin{equation}\label{eqn:rdn}
        \vol_{n-k}(K\cap H)=\frac{1}{n-k}\rdn{n-k}{\normk{K}{\cdot}^{-n+k}}(H),
    \end{equation}
    Then the inequality (\ref{eqn:sect}) can be written as
    \begin{equation}\label{eqn:sectrd}
        \rdn{n-k}{\normk{K}{\cdot}^{-n+k}}(H)\leq \rdn{n-k}{\normk{L}{\cdot}^{-n+k}}(H)+(n-k)\varepsilon.
    \end{equation}
    Since $K$ is a generalized $k$-intersection body, there exists $\mu_0\in M^+$, such that for each $\psi\in C\left(\sr\right)$,
    \begin{equation}\label{eqn:ms}
        \int_{\sr{}}\normk{K}{x}^{-k}\psi(x)d x=\,\mu_0(\rdn{n-k}{\psi}).
    \end{equation}
    Since $\mu_0$ is a positive functional, by (\ref{eqn:sectrd}) and (\ref{eqn:ms}), we have
    \begin{eqnarray}
        n\vol_n(K) &=& \int_{\sr}\normk{K}{x}^{-k}\normk{K}{x}^{-n+k}d x\notag\\
        &=& \mu_0\left(\rdn{n-k}{\normk{K}{\cdot}^{-n+k}}\right)\notag\\
        &\le& \mu_0\left(\rdn{n-k}{\normk{L}{\cdot}^{-n+k}}\right)+(n-k)\varepsilon \mu_0(1_G)\notag\\
        \label{eqn:sp}&:=& \mathrm{I}+\mathrm{II}.
    \end{eqnarray}
    Using (\ref{eqn:ms}), H\"{o}lder's inequality and polar formula for the volume, we get
    \begin{eqnarray}
        \mathrm{I} &=& \int_{\sr}\normk{K}{x}^{-k}\normk{L}{x}^{-n+k}d x\notag\\
        &\leq& \left(\int_{\sr}\normk{K}{x}^{-n}d x\right)^{k/n}\left(\int_{\sr}\normk{L}{x}^{-n}d x\right)^{(n-k)/n}\notag\\
        \label{eqn:sp1}&=& n\vol_n(K)^{k/n}\vol_n(L)^{(n-k)/n}.
    \end{eqnarray}
   Now, by (\ref{eqn:ms}), the well-known formula $|S^{n-1}|=n|B_2^n|$  (see \cite[p. 33]{K3}) and H\"older's inequality,
    \begin{eqnarray*}
        \mathrm{II} &=& (n-k)\e \mu_0(1_G)= \frac{(n-k)\e}{|S^{n-k-1}|} \int_{S^{n-1}} \|x\|_K^{-k}\ 1_S(x)\ dx\\
        &\le&  \frac{(n-k)\e}{|S^{n-k-1}|} \left(\int_{S^{n-1}} \|x\|_K^{-n}dx\right)^{k/n}|S^{n-1}|^{\frac{n-k}n}\\
        &=&\frac{n^{k/n}(n-k)|S^{n-1}|^{\frac{n-k}n}}{|S^{n-k-1}|} \vol_n(K)^{k/n} \e\\
        &=& \frac{n |B_2^n|^{\frac{n-k}n}}{|B_2^{n-k-1}|}  \vol_n(K)^{k/n} \e.\\
   \end{eqnarray*}
  Combining this with (\ref{eqn:sp}) and (\ref{eqn:sp1}), we get the result.
   \end{proof}
\medbreak

We now pass to stability for arbitrary measures. Let $\mu$ be a measure on $\R^n$ with even continuous
density $f.$ Let $\chi$ be the indicator function of the interval $[0,\,1]$. The measure $\mu$ of a star body $K$ can
be expressed in polar coordinates as follows:
\begin{eqnarray}
    \mu(K) &=& \int_K f(x)\ dx = \int_{\R^n} \chi(\|x\|_K)f(x)\ dx\notag\\
    \label{meas-polar}&=& \int_{S^{n-1}} \left(\int_0^{\|\theta\|_K^{-1}} r^{n-1}f(r\theta)\ dr \right) d\theta.
\end{eqnarray}

Similarly,  we can express the volume of a section of $K$ by an $(n-k)$ -dimensional subspace $H$ of $\R^n$ as
\begin{eqnarray}
    \mu(K\cap H) &=& \int_H\chi\left(\normk{K}{x}\right)f(x)d x\notag\\
    &=& \int_{\sr\cap H}\left(\intk{\normk{K}{\theta}^{-1}}{t\theta}\right)d\theta\notag\\
    \label{sect-polar} &=&  \rdn{n-k}{\int_0^{\|\theta\|_K^{-1}} r^{n-k-1} f(r\theta)\ dr }(H),
\end{eqnarray}
where the Radon transform is applied to a function of the variable $\theta\in S^{n-1}.$

We need the following lemma, which was also used by Zvavitch in his proof.

\begin{lem}\label{thm:elem}
    Let $a,b,k\in\R^+$, and $\alpha$ be a non-negative function on $(0,\max\set{a,b})$,
    such that the integral below converges. Then
    \begin{equation*}\label{lemma-zv}
        \int_0^ar^{n-1}\alpha(r)\ dt-a^k\int_0^ar^{n-k-1}\alpha(r)\ dr
    \end{equation*}
      $$ \leq\int_0^br^{n-1}\alpha(r)\ dr - a^k\int_0^br^{n-k-1}\alpha(r)\ dr$$
\end{lem}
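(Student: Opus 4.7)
The plan is to recognize this as the statement that a certain one-variable function attains its minimum at $x = a$. I would define
$$F(x) := \int_0^x r^{n-1}\alpha(r)\,dr - a^k \int_0^x r^{n-k-1}\alpha(r)\,dr = \int_0^x r^{n-k-1}(r^k - a^k)\alpha(r)\,dr,$$
so that the inequality to be proved is simply $F(a) \le F(b)$; here $a$ and $k$ are fixed parameters and $x$ is the running variable.

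The key computation is the derivative
$$F'(x) = x^{n-k-1}(x^k - a^k)\alpha(x).$$
Since $\alpha \ge 0$ and $x^k - a^k$ has the same sign as $x - a$ for $x > 0$, the derivative $F'$ is nonpositive on $(0,a)$ and nonnegative on $(a,\infty)$. Hence $F$ attains its global minimum on $(0,\infty)$ at $x = a$, which immediately yields $F(a) \le F(b)$ for every admissible $b > 0$.

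An equivalent derivative-free presentation, which I might prefer for expository clarity, is to write
$$F(b) - F(a) = \int_a^b r^{n-k-1}(r^k - a^k)\alpha(r)\,dr$$
and observe that on the interval joining $a$ and $b$ the integrand $r^{n-k-1}(r^k - a^k)\alpha(r)$ has the same sign as $b - a$: when $b \ge a$ the integrand is nonnegative and the interval is positively oriented, while when $b < a$ the integrand is nonpositive but so is the oriented length $b - a$. Either way the difference $F(b) - F(a)$ is nonnegative.

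There is no substantive obstacle here; the convergence hypothesis on $\alpha$ over $(0,\max\{a,b\})$ ensures that all quantities are well-defined, and the only mild care required is in the case $b < a$, where one must track the orientation of the integral correctly. The lemma is essentially a one-line calculus fact, and its role is purely auxiliary, to be combined in the proof of Theorem \ref{thm:stbpgm} with the polar formulas \eqref{meas-polar} and \eqref{sect-polar} applied to the radial function $\alpha(r) = f(r\theta)$ with $a = \|\theta\|_K^{-1}$ and $b = \|\theta\|_L^{-1}$.
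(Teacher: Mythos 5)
Your proposal is correct and coincides with the paper's own one-line argument: the authors reduce the lemma to $a^k\int_a^b r^{n-k-1}\alpha(r)\,dr\le\int_a^b r^{n-1}\alpha(r)\,dr$, which is exactly your identity $F(b)-F(a)=\int_a^b r^{n-k-1}(r^k-a^k)\alpha(r)\,dr\ge 0$; you merely spell out the sign and orientation analysis that the paper leaves implicit.
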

\begin{proof}
    The result follows from
    \[a^k\int_a^br^{n-k-1}\alpha(r)\ dr\leq\int_a^br^{n-1}\alpha(r)\ dr.\]
\end{proof}

\begin{proof}[Proof of Theorem \ref{thm:stbpgm}]
    Using (\ref{sect-polar}), inequality (\ref{eqn:mucomp}) can be written as
   \begin{equation} \label{eqn:murcomp}
  \rdn{n-k}{\int_0^{\|\theta\|_K^{-1}} r^{n-k-1} f(r\theta)\ dr }(H)
    \end{equation}
   $$\le \rdn{n-k}{\int_0^{\|\theta\|_L^{-1}} r^{n-k-1} f(r\theta)\ dr }(H) + \e, \qquad \forall H\in G(n,n-k).$$

 As in the proof of Theorem \ref{thm:stbp}, let $\mu_0$ be the positive functional
    associated with the generalized $k$-intersection body $K.$
    Applying $\mu_0$ to both sides of  (\ref{eqn:murcomp})
    and then using (\ref{eqn:ms}), we get
    \begin{equation}\label{eqn:mukcomp}
   \int_{S^{n-1}} \|\theta\|_K^{-k} \left(\int_0^{\|\theta\|_K^{-1}} r^{n-k-1} f(r\theta)\ dr\right) d\theta
    \end{equation}
    $$\le \int_{S^{n-1}} \|\theta\|_K^{-k} \left(\int_0^{\|\theta\|_L^{-1}} r^{n-k-1} f(r\theta)\ dr\right) d\theta
    + \e \mu_0(1_G).$$
    Applying Lemma \ref{thm:elem} with $a=\normk{K}{\theta}^{-1}$, $b=\normk{L}{\theta}^{-1}$
    and $\alpha(r)=f(r\theta)$ and then integrating over the sphere, we get
   $$ \int_0^{\|\theta\|_K^{-1}} r^{n-1} f(r\theta)\ dr - \|\theta\|_K^{-k} \int_0^{\|\theta\|_K^{-1}} r^{n-k-1} f(r\theta)\ dr$$
   $$\le  \int_0^{\|\theta\|_L^{-1}} r^{n-1} f(r\theta)\ dr - \|\theta\|_K^{-k} \int_0^{\|\theta\|_L^{-1}} r^{n-k-1} f(r\theta)\ dr,$$

    and

    \begin{eqnarray}
    \label{eqn:elek}&&\int_{S^{n-1}} \left(\int_0^{\|\theta\|_K^{-1}} r^{n-1} f(r\theta)\ dr \right) d\theta\\ 
   &-& \int_{S^{n-1}} \|\theta\|_K^{-k} \left(\int_0^{\|\theta\|_K^{-1}} r^{n-k-1} f(r\theta)\ dr \right) d\theta\notag\\
   &\le&  \int_{S^{n-1}} \left(\int_0^{\|\theta\|_L^{-1}} r^{n-1} f(r\theta)\ dr \right) d\theta \notag\\
   &-& \int_{S^{n-1}} \|\theta\|_K^{-k} \left(\int_0^{\|\theta\|_L^{-1}} r^{n-k-1} f(r\theta)\ dr \right) d\theta.\notag
   \end{eqnarray}
    Adding (\ref{eqn:mukcomp}) and (\ref{eqn:elek}) and using (\ref{meas-polar}) we get
    $$\mu(K)\leq\mu(L)+\varepsilon \mu_0(1_G).$$
    As shown in the proof of Theorem \ref{thm:stbp},
    $$\mu_0(1_G)\leq\frac{n}{n-k}c_{n,k}\vol_n(K)^{k/n},$$
    which completes the proof.
\end{proof}
As mentioned earlier, every intersection body is a generalized $k$-intersection body for every $k,$
so if $K$ is an intersection body, the results of Theorems \ref{thm:stbp} and \ref{thm:stbpgm} hold
for all $k$ at the same time, as well as the results of Corollaries \ref{thm:stbpa}, \ref{ineq-meas}, \ref{hyper-meas}.
\bigbreak

{\bf Acknowledgement.} The first named author wishes to thank
the US National Science Foundation for support through
grant DMS-1001234.


\end{document}